\newcommand\Lnorm[2]{\left\Vert{#1}\right\Vert_{L^{#2}}}
\def\00{{\infty}}
\newcommand\set[1]{\left\{#1\right\}}
\newcommand\pref[1]{~(\ref{#1})}
\theoremstyle{plain}
\newtheorem{theorem}{Theorem}[section]
\newtheorem{corollary}[theorem]{Corollary}
\newtheorem{lemma}[theorem]{Lemma}
\newtheorem{proposition}[theorem]{Proposition}
\theoremstyle{definition}
\newtheorem{definition}{Definition}
\theoremstyle{remark}
\numberwithin{equation}{section}
\newcommand\bfc{\mathbf{c}}
\def\cX{\mathcal X}
\def\cM{\mathcal M}
\def\cD{\mathcal D}
\def\C{\mathbb C}
\def\R{\mathbb R}
\def\la{\lambda}
\def\a{\alpha}
\def\b{\beta}
\def\vp{\varphi}
\def\dmu{d\mu}
\def \RE {\text{\rm Re}\,}
\def \IM {\text{\rm Im}\,}
\def\inutile#1{{}}
\begin{document}

\title[Dynamics of the heat semigroup in Jacobi analysis]
{Dynamics of the heat semigroup in Jacobi analysis}

\author[F. Astengo, B. Di Blasio]
{Francesca Astengo and Bianca Di Blasio}

\address
{Dipartimento di Matematica\\
Via Dodecaneso 35\\
16146 Genova\\ Italy} \email{astengo@dima.unige.it}

\address
{Dipartimento di Matematica e Applicazioni\\
Via Cozzi 53\\
  20125 Milano\\ Italy}
\email{bianca.diblasio@unimib.it}

\thanks{Work partially supported
by  MIUR (project ``Analisi armonica").
}

\subjclass[2000]{Primary: 43A32   
; Secondary:  43A50   
43A90   
47A16   
}                         

\keywords{Jacobi analysis, $L^p$ heat semigroup, chaotic semigroups}

\subjclass[2000]{Primary: 43A32
; Secondary:  
43A90   
47A16   
}                         

\keywords{Jacobi analysis, $L^p$ heat semigroup, chaotic semigroups}

\begin{abstract}
Let $\Delta$ be the Jacobi Laplacian. 
We study the chaotic and hypercyclic behaviour
of the strongly continuous semigroups of operators generated
by perturbations of $\Delta$ with a multiple of the identity on $L^p$ spaces.
 \end{abstract}

\maketitle

\section{Introduction}

%
 
Chaos in the context 
of strongly continuous semigroups of bounded linear operators in Banach spaces
has been introduced by W.~Desch, W. Schappacher and G.~Webb~\cite{DSW}
 as a generalization to
 continuous time of the discrete time case.
 In their paper~\cite{DSW}, the authors give a sufficient condition for a strongly continuous semigroup to be chaotic in terms of 
the spectral properties
of its infinitesimal generator.
Moreover they study in detail many examples, mainly the
transport equation and second order differential operators
with constant coefficients (see also~\cite{dLEG}).

The purpose of this paper is to apply the criterion in~\cite{DSW}
to study of the dynamics of the (modified)
heat semigroup  generated by the Jacobi operator
(i.e., the generator is a
perturbation of the Jacobi operator by a multiple of the identity),
which is a second order differential
 operator with nonconstant
coefficients.

Jacobi analysis can be developed as a generalization
of the Fourier-cosine transform and has been studied
by many authors (see~\cite{Koo}), the main interest being the interplay
between the analytic and geometric properties of the Jacobi operator.
Indeed, in certain cases, the Jacobi operator 
is the radial part of the Laplace--Beltrami operator
on Damek--Ricci spaces~\cite{ADY},
therefore Jacobi analysis includes radial analysis
on symmetric spaces of real rank one as a special case.

The dynamics of the (modified) heat semigroup
on non compact
  symmetric spaces was already studied
in~\cite{JiWeber1, JiWeber2}.
We extends the results in~\cite{JiWeber1} to the context  of 
Jacobi analysis. Therefore,  as particular cases, we cover Damek--Ricci spaces and Heckman--Opdam root spaces of rank one,
which were not treated in~\cite{JiWeber1}.

The paper is organized as follows: in Section~2 we 
settle notation and recall  some  basic facts regarding
chaotic semigroups, Jacobi analyis and Lorentz spaces.
In Section~3 we establish some properties of spherical
functions, the weak type (1,1) boundedness of the heat maximal function
and the $L^{p,q}$ inversion formula.
In Section~4 we apply the results in the previous section
to the study of the dynamics of
(modified) heat semigroups.

\section{Notation and Preliminaries}

\subsection{Chaotic semigroups}

In this paper we follow R.~L.~Devaney~\cite{Dev}, who has defined
 chaos in metric spaces in the following sense. 
  A continuous map $f$ on a metric space $X$ is said to be 
chaotic if it is topologically transitive, i.e., some element has
a dense orbit,
and if the set of its periodic points is dense in $X$.
These two conditions imply (see~\cite{Monthly})
that $f$ has sensitive 
dependence 
on initial 
conditions.

  In~\cite{DSW} the authors have generalized
this definition to strongly
continuous semigroups as follows.

\begin{definition}
 A strongly continuous semigroup $\{T(t) : t\geq 0\}$ on a Banach space $\cX$ 
is said to be {\em hypercyclic} if there exists $f$ in $\cX$ such that its orbit  
$\{T(t)f : t\geq 0 \}$ is dense in~$\cX$.
\end{definition}

\begin{definition}
 A strongly continuous semigroup $\{T(t) : t\geq 0\}$ on a Banach space $\cX$ is said to be  {\em chaotic} if it is hypercyclic and   the set of periodic points
$$
\{ f\in \cX : \exists t>0 \mbox{~such that~} T(t)f=f \}
$$	 is dense in $\cX$.		   
 \end{definition}

 We denote by $\sigma(B)$ and   $\sigma_{pt}(B)$ respectively the spectrum and the   point spectrum of a linear operator $B$ on a Banach space $\cX$.
 
A sufficient condition for a strongly continuous semigroup to be chaotic in terms of 
the spectral properties
of its generator was given by Desch, Schappacher, and Webb~\cite[Theorem~3.1]{DSW}: 
\begin{theorem}
\label{thm dsw}
Let $B$ be the infinitesimal generator of
a strongly continuous semigroup $\{T(t) : t\geq 0\}$  on a separable 
 Banach space $\cX$. 
 Let $\Omega$ be an open connected subset of $\sigma_{pt}(B)$ which intersects the imaginary axis. For each $z$ in $\Omega$ let $\phi_z$ be a nonzero eigenvector, i.e. 
 $B \phi_z=z\, \phi_z$. 
   Suppose that for every $f$ in the dual space $\cX'$ of $\cX$  the function $F_{f}:\Omega\to \C, $ defined by 
 $$
 F_{f}=\langle f,\phi_z \rangle
 $$
  is analytic  and   does not vanish identically unless $f=0$. Then the semigroup $\{T(t) : t\geq 0\}$ is chaotic.
  \end{theorem}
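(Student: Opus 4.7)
The plan is to verify Devaney's two requirements---density of periodic points and hypercyclicity---in turn. The common engine will be the following density principle, an immediate consequence of the Hahn--Banach theorem and the hypothesis on $F_f$: for every subset $E\subset\Omega$ having an accumulation point in $\Omega$, the linear span of $\{\phi_z:z\in E\}$ is dense in $\cX$. Indeed, if $f\in\cX'$ annihilates each $\phi_z$ with $z\in E$, then $F_f$ vanishes on $E$, hence vanishes identically on the connected open set $\Omega$ by analyticity; by hypothesis this forces $f=0$, so the span cannot lie in any proper closed subspace.

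\textbf{Dense periodic points.} Since $\Omega$ is open and meets $i\R$, it contains an open imaginary interval $\{i\alpha:\alpha\in I\}$. For such $\alpha$ one has $T(t)\phi_{i\alpha}=e^{it\alpha}\phi_{i\alpha}$, which is periodic in $t$. I would then observe that any finite combination $\sum_k c_k\phi_{i\alpha_k}$ with $\alpha_k\in\mathbb{Q}\cap I$ admits a common period ($2\pi$ times the least common multiple of the denominators of the $\alpha_k$), hence is a periodic point of the semigroup. Since $\mathbb{Q}\cap I$ has accumulation points in $\Omega$, the density principle shows such combinations are dense in $\cX$.

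\textbf{Hypercyclicity.} Next I would split $\Omega$ using $\Omega_{\pm}:=\{z\in\Omega:\pm\RE z>0\}$; both are open and nonempty, since a small disc in $\Omega$ about any $i\alpha_0\in\Omega\cap i\R$ crosses the imaginary axis on both sides. Let $D_\pm:=\mathrm{span}\{\phi_z:z\in\Omega_\pm\}$; both are dense by the principle. For $f=\sum_k c_k\phi_{z_k}\in D_-$, the orbit $T(t)f=\sum_k c_k e^{tz_k}\phi_{z_k}$ tends to $0$ as $t\to\infty$ because each $\RE z_k<0$. On $D_+$ I would define the formal right-inverse $S(t)\sum_k c_k\phi_{z_k}:=\sum_k c_k e^{-tz_k}\phi_{z_k}$; then $T(t)S(t)=\mathrm{id}$ on $D_+$ and $S(t)g\to 0$ as $t\to\infty$ for $g\in D_+$. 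Combined with the separability of $\cX$, these are the standard hypotheses of the hypercyclicity criterion, and a Baire category argument yields a vector with dense orbit.

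\textbf{Main obstacle.} The conceptually delicate point will be the simultaneous density of $D_-$, $D_+$, and the set of periodic points; each invocation leans essentially on both the analyticity of $F_f$ and the connectedness of $\Omega$, which combine to promote local vanishing on a thin set to global vanishing on $\Omega$. Once the density principle is in hand, the remaining steps reduce to a routine application of the hypercyclicity criterion.
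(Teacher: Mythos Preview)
The paper does not prove this theorem at all: it is stated as a quotation of \cite[Theorem~3.1]{DSW} and used as a black box in the proof of Theorem~\ref{chaotic}. So there is no ``paper's own proof'' to compare against; your proposal is in fact a reconstruction of the original Desch--Schappacher--Webb argument.

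That reconstruction is essentially correct and follows the standard route. The Hahn--Banach/identity-theorem density principle is exactly the right engine, and your applications of it to the rational imaginary eigenvalues (for periodic points) and to the half-planes $\Omega_\pm$ (for the hypercyclicity criterion) are sound. Two small points worth tightening if you write this out in full: first, to guarantee that $\Omega_+$ and $\Omega_-$ are nonempty you want a point of $\Omega\cap i\R$ that is genuinely interior to $\Omega$, which is automatic since $\Omega$ is open; second, for the common period of $\sum_k c_k\phi_{i\alpha_k}$ with $\alpha_k\in\mathbb{Q}$ it is cleanest to clear denominators and take $t=2\pi q$ for a common denominator $q$, rather than invoking an ``lcm of denominators'' formula. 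Neither point affects the validity of the outline.
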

  
  Many authors studied
 sufficient conditions for a strongly 
	continuous semigroups to be hypercyclic
	(see~\cite{JiWeber1} and the references therein).

\subsection{Lorentz spaces}
Let $f$ be a measurable function on the measure space $(X,\cM,\mu)$. The \emph{nonincreasing
rearrangement} of $f$ is the function $f^*$ on $\R^+$ defined by
\[
f^*(t)=\inf\left\{s\in\R^+\,\,: \,\,
   \mu\left(\{x\in X\, :\,
|f(x)|>s\} \right) \, \leq t \right\} \qquad \forall t\in \R^+.
\]
The function $f^*$ is nonincreasing, nonnegative, equimeasurable
with $f$ and right-continuous. For any given measurable function
$f$ on $X$, we define
$$
\Lnorm{f}{p,q}=\left(\frac{q}{p}\int_0^\00 \left(s^{1/p}\,
f^*(s)\right)^q\, \frac{ds}{s} \right)^{1/q} \qquad 1\leq p<\00,\quad \qquad
1\leq q<\00,
$$
and
$$
\Lnorm{f}{p,\00}=\sup\left\{s^{1/p}\,
f^*(s) \, : \, s\in \R^+ \right\} \qquad 1\leq p< \00.
$$

\begin{definition} Let $1\leq p<\00$ and $1\leq q\leq\00$.
The Lorentz space $L^{p,q}(\mu)$
 consists of those measurable functions
$f$ on $X$ such that $\Lnorm{f}{p,q}$ is finite.
\end{definition}

It is easy to check that  $L^{p,p}(\mu)$ coincides
with the usual Lebesgue
space~$L^p(\mu)$, with equality of norms. Moreover, if $q_1<q_2$, then
$L^{p,q_1}(\mu)$ is contained in $L^{p,q_2}(\mu)$ and, if $1<p,q<\00$, the
dual space of $L^{p,q}(\mu)$ is $L^{p',q'}(\mu)$.
Here and elsewhere in this paper $p'$ and $q'$ 
are the conjugate exponents of $p$ and $q$, i.e.
$\frac1p+\frac1{p'}=\frac1q+\frac1{q'}=1$.

A good reference for Lorentz spaces is \cite{H}. We recall the
following
multiplication theorem from that paper. 
\begin{lemma} \textnormal{\cite[p.~271]{H}} \label{Huntmultipl}
Let $p_0,p_1$ and $q_0,q_1$ be
in $[1,\00]$. Then there exists a constant $C$
 such that for every $f$ in
$L^{p_0,q_0}(\mu)$ and $m$ in $L^{p_1,q_1}(\mu)$
$$
\Lnorm{mf}{p,q}\leq C\, \Lnorm{m}{p_1,q_1}\, \Lnorm{f}{p_0,q_0},
$$
where $\frac{1}{p}=\frac{1}{p_0}+\frac{1}{p_1}$ and
$\frac{1}{q}=\frac{1}{q_0}+\frac{1}{q_1}$.
\end{lemma}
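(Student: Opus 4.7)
The plan is to reduce the inequality to a pointwise estimate on nonincreasing rearrangements and then to apply H\"older's inequality in the integral defining the Lorentz quasi-norm.

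The first step is to establish the pointwise bound
\[
(mf)^*(t) \leq m^*(t/2)\, f^*(t/2), \qquad t>0.
\]
Setting $s = m^*(t/2)$ and $u = f^*(t/2)$, the definition of the rearrangement yields $\mu(\{|m|>s\})\leq t/2$ and $\mu(\{|f|>u\})\leq t/2$. Since $\{|mf|>su\}\subseteq\{|m|>s\}\cup\{|f|>u\}$, we obtain $\mu(\{|mf|>su\})\leq t$, hence $(mf)^*(t)\leq su$.

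The second step is to substitute this bound into the defining integral of $\Lnorm{mf}{p,q}$. After the change of variable $t\mapsto 2t$ and using $1/p = 1/p_0 + 1/p_1$ to split the power of $t$, for $1\le q<\infty$ one obtains
\[
\Lnorm{mf}{p,q}^q \;\le\; C \int_0^\infty \bigl(t^{1/p_1} m^*(t)\bigr)^q \bigl(t^{1/p_0} f^*(t)\bigr)^q \, \frac{dt}{t}.
\]
I would then apply H\"older's inequality with exponents $q_1/q$ and $q_0/q$; these are conjugate because $1/q = 1/q_0 + 1/q_1$, and both are at least $1$ since $q \le \min(q_0,q_1)$. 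Up to multiplicative constants, the two resulting factors are precisely $\Lnorm{m}{p_1,q_1}^q$ and $\Lnorm{f}{p_0,q_0}^q$. Taking $q$-th roots yields the desired estimate.

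The endpoint cases (when $q=\infty$, or $q_0=\infty$, or $q_1=\infty$) are handled separately by returning to the pointwise bound of the first step: the relevant integrals degenerate to suprema and the H\"older step is replaced by direct pointwise multiplication. The main obstacle is the bookkeeping in these endpoint configurations and the verification that the H\"older exponents are admissible for every allowed choice of $(p_i,q_i)\in[1,\infty]^2$; the pointwise rearrangement inequality is the conceptual core, and the rest is routine manipulation of the Lorentz quasi-norm.
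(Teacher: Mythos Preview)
Your argument is correct and is the standard proof of Hunt's multiplication theorem: the pointwise rearrangement bound $(mf)^*(t)\le m^*(t/2)\,f^*(t/2)$ followed by H\"older with exponents $q_0/q$ and $q_1/q$ is exactly how the result is established. Note, however, that the paper does not prove this lemma at all; it is simply quoted from Hunt~\cite[p.~271]{H} as a known fact, so there is no ``paper's proof'' to compare with. Your write-up supplies precisely the argument one finds in Hunt's paper, and the endpoint cases are handled correctly as you describe.
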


	\subsection{Jacobi Analysis}\label{Jacobi}
We recall some facts about Jacobi analysis 
which we shall need in the sequel. 
We follow Koornwinder~\cite{Koo} and the normalizations therein.
Throughout this paper, $\a,\b$ will be real numbers, with $\a\geq\b\geq-\frac12$, 
$\a>-\frac12$. We define
\[
A(x)=(2\sinh x)^{2\a+1}\,(2\cosh x)^{2\b+1}
\qquad\forall x>0.
\]

For complex $\lambda$ and $\rho=\a+\b+1$, let 
$$
\Delta =- \left(\frac{d}{dx}\right)^2-
\frac{A'(x)}{A(x)}\frac{d}{dx} 
$$
and  consider the differential equation
\begin{equation}
\label{eq:Jacobi}
\Delta u (x)=(\la^2+\rho^2)\, u(x)
\qquad  x>0.
\end{equation}
Using the substitution $z=-\sinh^2x$, we can transform
equation\pref{eq:Jacobi} in the well known hypergeometric differential
equation
\[
z(1-z)\,u''(z)+(c-(a+b+1)z)\,u'(z)-ab\,u(z)=0
\]
of parameters $a=\frac{1}{2}(\rho-i\lambda)$, $b=\frac{1}{2}(\rho+i\lambda)$,
$c=\a+1$.

Let $ {_2}F_1$ denote the Gaussian hypergeometric function.
The Jacobi function~$\varphi_\lambda =\varphi_\lambda^{(\alpha,\beta)}$
 of order $(\alpha,\beta)$    
 $$
 \varphi_\lambda (x)=
 {_2}F_1(\tfrac{1}{2}(\rho-i\lambda),
\tfrac{1}{2}(\rho+i\lambda); \alpha+1,-\sinh^2x)
\qquad x \in \R
 $$
 is the unique even smooth function on $\R$     which satisfies
 \hbox{$u(0)=1$} and the differential equation\pref{eq:Jacobi}.  
 
Therefore the function $\lambda\mapsto\varphi_\lambda(x)$ is analytic for all
$x\in \R  $. 
Moreover $\{\varphi_\lambda \}$ is a continuous orthogonal system on $\R^+$ with respect to the weight function $A$.

We consider on $\R^+=(0,\infty)$ the measure $\mu$ which is absolutely continuous
with respect to the ordinary Lebesgue measure and has density $A$. 
When $1\leq p\leq\infty$ we denote by $L^p(\mu)$ the ordinary Lebesgue space on $\R^+$
with respect to the measure $\mu$.

Note that
\begin{equation}
\label{crescitaA}
|A(x)|\leq
C\, \begin{cases}
x^{2\a+1} &0<x<1\\
e^{2\rho x} &x\geq 1.
\end{cases}
\end{equation}

The Jacobi transform $f\longmapsto \hat f $ is defined by
\[
\hat f(\la)=
\int_{\R^+}f(x)\, \vp_\lambda (x)\,\dmu(x),
\]
for all  functions~$f$ on $\R^+$ and complex numbers $\lambda$
for which the right hand side is well defined.

Let  $\cD^\sharp(\R)$ be the space of   smooth even functions on $\R$ with 
compact support and denote by $\cD^\sharp(\R^+)$ 
the space of the restrictions to $\R^+$ of functions in 
$\cD^\sharp(\R)$. 
Note that 
$$
(\Delta f)\hat{\phantom f}=(\la^2+\rho^2)\hat f,
\qquad \forall f\in 
\cD^\sharp(\R^+).
$$

The following inversion formula holds
for functions in $\cD^\sharp(\R)$~\cite[p. 9]{Koo}
\[
f(x)=\frac1{2\pi} \,\int_0^\infty\hat f(\la)\, \vp_\lambda (x)\,|\bfc(\la)|^{-2} d\la,
\qquad \forall x\in \R
\]
where $\bfc(\la)$ is a multiple of the  meromorphic Harish-Chandra function
given by the formula
\[
\bfc(\la)
=
\frac{2^{\rho-i\lambda}\Gamma(\a+1)\Gamma( i\lambda)}
{\Gamma(\frac{1}{2}(\rho+i\lambda))\Gamma(\frac{1}{2}(\rho+i\lambda)-\beta)}.
\]
Moreover the Jacobi transform $f\longmapsto \hat f $
extends to an isometry from $L^2(\mu)$ onto $L^2(\R, \frac1{2\pi} |\bfc(\la)|^{-2} d\la)$.

The following formula holds
\begin{equation}\label{prodotto}
\vp_\la(x)\, \vp_\la(y)=\int_0^\infty \vp_\la(u)\, W(x,y,u)\, \dmu(u)
\end{equation}
where the kernel $W$ is explicitly known (see~\cite[p.~58]{Koo}).
Thus 
one can define the (generalized) translation operator
\[
\tau_xf(y)
=\int_0^\infty f(u)\, W(x,y,u)\, \dmu(u)
\qquad\forall f\in
\cD^\sharp(\R^+)
\]
and the (generalized) convolution
of functions $f,g$ in $\cD^\sharp(\R^+)$, say, by
\[
f\star g(x)=\int_0^\infty \tau_xf(y)\, g(y)\, \dmu(y),
\]
so that 
$$
{(f\star g)}\hat{\phantom f}=\hat f\,\,  \hat g.
$$
The function $W$ is nonnegative, supported  in
$|x-y|\leq u\leq x+y$ and symmetric in its three  variables,
so that $f\star g=g\star f$. 
From\pref{prodotto} with $\la=i\rho$ it follows 
\[
\int_0^\infty
W(x,y,u)\,  \dmu(u)=1.
\]

Moreover $\int_0^\infty\tau_xf\,\dmu=\int_0^\infty f\,\dmu  $ and $\tau_0f=f$.  Finally, 
   the Young inequality holds~\cite[p. 61]{Koo}
 \begin{equation}
\label{young}
\|f\star g\|_r\leq C\, \|f\|_p\,\|g\|_q
\qquad \frac1{p}+\frac{1}{q}=1+\frac{1}{r}.
\end{equation}

\subsection{Heat kernel}

Let $f$ be in $\cD^\sharp(\R^+)$ 
and
consider the initial value problem for the heat equation
\begin{equation}
\label{IVPcalore}
\begin{cases}
\Delta u(t,x)=\partial_t u(t,x) 
\\
\lim_{t\to 0^+}u(t,x)=f(x).
\end{cases}
\quad t>0, \, x\in \R
\end{equation}
One can show that the  solution $u$ to the problem\pref{IVPcalore}
can be written as 
$u(t,x)=f\star h_t(x)$, where the function 
$h_t$ is called  {\em heat kernel} and it is defined    on the Jacobi transform side by 
\[
\widehat{h_t} (\la) = 
e^{-t(\la^2+\rho^2)} \qquad \forall t>0,\quad \la\in \C.
\]

The heat kernel  $h_t$ is a nonnegative  function   in $L^p(\mu)$ for all $p\geq 1$ and
$\|h_t\|_1=1$~\cite{Trimeche}.

 For every $p$ in $[1,\infty]$ denote by $\Delta_p$ the closure in $L^p(\mu)$ of the operator $\Delta$
with domain $\cD^\sharp (\R^+)$.
By the Young inequality\pref{young}, the family of operators $\{H_p(t)\}_{t>0}$ defined by
$$
H_p(t)f=h_t\star f\qquad \forall f\in L^p(\mu)
$$
is a strongly continuous symmetric semigroup on $L^p(\mu)$
whose generator is $\Delta_p$.

The sharp estimate for the heat kernel was established in~\cite{Kawazoe}

\begin{equation}\label{stimaht}
h_t(x)\simeq  \,t^{-\a-1}\,(1+t+x)^{\a-\frac12}\,
(1+x)\, e^{-\rho x-\rho^2 t}\, e^{-x^2/4t}
\qquad
\forall x>0,\, t>0.
\end{equation}
Here $f(x)\simeq g(x)$ stands for $C_1g(x)\leq f(x)\leq C_2g(x)$, for some constants $C_1$ and $C_2$.

\section{The   $L^p$ inversion formula}

Since for real values of $\lambda$, we have $|\vp_\la|\leq \vp_0\leq 1$ (see~\cite[p.~53]{Koo}),
the function space 
\[
L^1_0 (\mu) =\set{f:\R^+\to\C\, :\,
f\ \text{measurable\ and\ }\,
f\vp_0\in L^1(\mu)}
\] 
contains $L^1(\mu)$ and the Jacobi transform $  \hat f$ is well defined as a function on $\R$ when $f$ is in $L^1_0 (\mu)$.

In the next results we show that the Lorentz spaces 
$L^{p,q} (\mu )$, $1 < p < 2$, $1 
\leq q \leq \infty$ are subspaces of $L^1_0(\mu)$.

When $1\leq p<\infty$ we define
$$
S_p = 
\set{\la\in \C\, :\, |\IM(\la)|\leq \left| 1-\tfrac2{p}\right|\,\rho
} .
$$
Note that $S_p=S_{p'}$, when $p>1$. By $S^\circ_p$ and $\partial S_p$
 we denote respectively the interior and the boundary of $S_p$. 
It is well known that from the estimate~\cite[p. 53]{Koo}
\begin{equation}
\label{stimasferiche}
|
\varphi_\lambda(x)|\leq C\, (1+x)\, e^{(|\IM(\la)|-\rho)x}
\qquad \forall x\geq 0, \la\in \C,
\end{equation}
it follows that if $1<p< 2$ and  $\la$ is in   $S^\circ_p$
then $\vp_\la$ is in $L^{p'}(\mu)$. A more precise
result holds (see~\cite{RS} for the case of Damek--Ricci spaces).
\begin{lemma}\label{sferichelpq}
The following estimates hold.
 \begin{itemize}
\item[(i)] Let $1 < p < 2$. If $\la$ is in   $S^\circ_p$
then $\vp_\la$ is in $L^{p',q}(\mu)$ for any $q$ 
in $[1, \infty]$. 
\item[(ii)] If $1\leq p<2$ and $\la$ is in $\partial S_p$
then $\vp_\la$ is in $L^{p',\infty}(\mu)$.
\item[(iii)] If $p =2$ and $\la$ is real, then $\vp_\la/(1+x)$ is in $L^{2,\infty}(\mu)$.\end{itemize}
\end{lemma}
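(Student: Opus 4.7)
The plan is to reduce each statement to the pointwise estimate \pref{stimasferiche} together with the weight bound \pref{crescitaA}, and then translate into Lorentz membership either via an integrability argument (for (i)) or via a direct computation of the $\mu$-distribution function of $\vp_\la$ (for (ii) and (iii)).

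For (i) I would argue that the strict inequality $\rho-|\IM\la|>2\rho/p'$ on $S_p^\circ$ forces the exponent in $|\vp_\la(x)|^r\,A(x)\leq C(1+x)^r\,e^{(r(|\IM\la|-\rho)+2\rho)x}$ to be negative for every $r>2\rho/(\rho-|\IM\la|)$, a threshold that sits strictly below $p'$. This places $\vp_\la$ in $L^{r_1}(\mu)$ for some $r_1<p'$; being smooth, bounded and vanishing at infinity, $\vp_\la$ is also in $L^\infty(\mu)$. The elementary inclusion $L^{r_1}(\mu)\cap L^\infty(\mu)\subset L^{p',q}(\mu)$, valid for every $q\in[1,\infty]$ and obtained by splitting the defining integral of the $L^{p',q}$-norm at a suitable level and using $f^*(t)\leq\min(\|\vp_\la\|_\infty,\|\vp_\la\|_{r_1}t^{-1/r_1})$, then finishes (i).

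Part (iii) is the cleanest case: for real $\la$ the estimate \pref{stimasferiche} rearranges to $|\vp_\la(x)/(1+x)|\leq C\,e^{-\rho x}$, so $\{|\vp_\la/(1+x)|>s\}\subset[0,x_s]$ with $x_s=\rho^{-1}\log(C/s)$. Using \pref{crescitaA} one gets $\mu([0,x_s])\leq C'\,e^{2\rho x_s}\leq C''\,s^{-2}$ for small $s$, hence $\sup_{s>0}s\,\mu(\{|\vp_\la/(1+x)|>s\})^{1/2}<\infty$, which is precisely membership in $L^{2,\infty}(\mu)$.

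The main obstacle is (ii). A direct application of \pref{stimasferiche} on $\partial S_p$ only yields $|\vp_\la(x)|\leq C(1+x)\,e^{-2\rho x/p'}$, and the $(1+x)$ factor produces a logarithmic loss that places $\vp_\la$ merely in $L^{r,\infty}(\mu)$ for every $r>p'$, missing the endpoint. To remove the $(1+x)$ I would invoke the Harish--Chandra type expansion $\vp_\la=\bfc(\la)\,\Phi_\la+\bfc(-\la)\,\Phi_{-\la}$, with $\Phi_{\pm\la}(x)\sim e^{(\pm i\la-\rho)x}$ as $x\to\infty$ (Koornwinder). For generic $\la\in\partial S_p$ the term of amplitude $e^{(|\IM\la|-\rho)x}=e^{-2\rho x/p'}$ dominates the faster-decaying one, and one obtains the improved pointwise bound $|\vp_\la(x)|\leq C\,e^{-2\rho x/p'}$ uniformly in $x$. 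The exceptional value $\la=\pm i\rho$ (at $p=1$) is handled separately via $\vp_{\pm i\rho}\equiv 1$, by direct inspection of the terminating hypergeometric series. Once the sharper pointwise bound is secured, the level-set computation of (iii) applies verbatim to give $\mu(\{|\vp_\la|>s\})\leq C\,s^{-p'}$, hence $\vp_\la\in L^{p',\infty}(\mu)$.
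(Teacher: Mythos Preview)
Your proposal is correct and follows essentially the same route as the paper: reduce everything to the pointwise bound \pref{stimasferiche} together with the weight estimate \pref{crescitaA}, and then pass to Lorentz membership via the distribution function of a dominating exponential. The only noteworthy differences are cosmetic. In (i) the paper computes the rearrangement of $\psi(x)=e^{-(2/p'+2\varepsilon)\rho x}$ directly and reads off $L^{p',1}$ membership, whereas you reach the same conclusion through the inclusion $L^{r_1}(\mu)\cap L^\infty(\mu)\subset L^{p',q}(\mu)$ for $r_1<p'$; these are two phrasings of the same computation. In (ii) the paper removes the $(1+x)$ factor by quoting the Flensted--Jensen estimate $|\vp_\la(x)|\leq C_\delta\,e^{(|\IM\la|-\rho)x}$ for $x\geq\delta$, while you extract the same bound from the Harish--Chandra expansion $\vp_\la=\bfc(\la)\Phi_\la+\bfc(-\la)\Phi_{-\la}$ in Koornwinder; the Flensted--Jensen inequality is precisely a consequence of that expansion, so the two arguments coincide in substance. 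Your treatment of (iii) matches the paper's.
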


\begin{proof} Let $1<p<2$.
It suffices to show that (i) holds with $q=1$. By~\pref{stimasferiche},
when $\la$ is in   $S^\circ_p$ for some $\varepsilon>0$,
\[
|\varphi_\lambda (x)|\leq C\,
e^{-(\frac2{p'}+2\varepsilon)\rho x} 
\qquad\forall x \in \R^+,
\]
therefore
\[
\vp_\la^*(t)\leq C\,\psi^*(t) \qquad \forall t\in \R^+,
\]
where $\psi(x)=e^{-(\frac2{p'}+2\varepsilon)\rho x} $.
We now compute $\psi^*$. By equation\pref{crescitaA} we have 
\begin{align*}
\psi^*(t)&=\inf
\set{s\in \R^+\,:\,\int_0^{\log(s^{-1/(2/p'+2\varepsilon)\rho)})}A(x)\, dx\leq t}
\\
&\leq C\,
\begin{cases}
 t^{-(\frac{1}{p'}+\varepsilon)} &t>1
 \\
 \text{const} &t<1.
\end{cases}
\end{align*}
We conclude that $\psi^*$ belongs to $L^{p',q}(\mu)$ for any $q$ 
in $[1, \infty]$.

The proofs of (ii) and (iii) are similar. 
If $p =2$ and $\la$ is real we apply again~\pref{stimasferiche}.

If $|\IM(\la)|=(2/p-1 )\rho$ and $1\leq p<2$, we use the  inequality~\cite{FJ} 
\begin{equation}
|
\varphi_\lambda (x)|\leq C_\delta\,   e^{-(|\IM(\la)|+\rho)x}
\qquad \forall x\geq \delta>0.
\end{equation}
  
\end{proof}

Denote by $P_p$, $1\leq p<\infty$, the parabolic region 
\[
P_p=\set{\la^2+\rho^2\, :\, \la\in S_p^\circ}.
\]
 
\begin{corollary}\label{parabolica}
Let $2<p<\infty$. For any $z$ in the   parabolic region $
P_p$ there exists a nonzero $\phi_z$ in $L^{p}$ such that $\Delta\phi_z=z\phi_z$.
\end{corollary}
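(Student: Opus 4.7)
The plan is to exhibit $\phi_z$ explicitly as a Jacobi function $\varphi_\lambda$ for a suitable choice of $\lambda$, and then reduce the $L^p$ membership to Lemma~\ref{sferichelpq}\,(i) by exploiting the symmetry $S_p=S_{p'}$.

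First I would parametrise $z\in P_p$. By the very definition of $P_p$, there exists $\lambda\in S_p^\circ$ such that $z=\lambda^2+\rho^2$. Set $\phi_z:=\varphi_\lambda$. This function is nonzero, since $\varphi_\lambda(0)=1$, and from the differential equation~\eqref{eq:Jacobi} defining $\varphi_\lambda$ we have
\[
\Delta\,\phi_z=(\lambda^2+\rho^2)\,\phi_z=z\,\phi_z
\]
in the pointwise/classical sense on $\R^+$.

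Second, the only remaining point is $L^p$-integrability of $\varphi_\lambda$ with respect to $\mu$. Here one uses the crucial identity $S_p=S_{p'}$ (noted right after the definition of $S_p$): since $2<p<\infty$, the conjugate exponent satisfies $1<p'<2$, and $\lambda\in S_p^\circ=S_{p'}^\circ$. Consequently Lemma~\ref{sferichelpq}\,(i), applied with $p$ replaced by $p'$, gives $\varphi_\lambda\in L^{(p')',q}(\mu)=L^{p,q}(\mu)$ for every $q\in[1,\infty]$. Choosing $q=p$ yields $\varphi_\lambda\in L^{p,p}(\mu)=L^p(\mu)$, which is exactly what is needed.

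There is no substantial obstacle here: the work has already been done in Lemma~\ref{sferichelpq}. The only small point to watch is the bookkeeping with conjugate exponents, which is resolved by the observation $S_p=S_{p'}$. Putting the three items together, $\phi_z:=\varphi_\lambda$ is a nonzero element of $L^p(\mu)$ satisfying $\Delta\phi_z=z\phi_z$, proving the corollary.
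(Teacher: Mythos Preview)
Your argument is correct and is exactly the intended one: the corollary is an immediate consequence of Lemma~\ref{sferichelpq}\,(i) via the identification $S_p^\circ=S_{p'}^\circ$ for $2<p<\infty$, together with the fact that $\varphi_\lambda$ solves~\eqref{eq:Jacobi} and satisfies $\varphi_\lambda(0)=1$. The paper gives no separate proof precisely because this unwinding of definitions is all that is required.
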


Note that Corollary~\ref{parabolica} implies that the  parabolic region $P_p$ is contained
the point 
spectrum $\sigma_{pt}(\Delta_{p})$, for $2<p<\infty$.

\begin{corollary}
$L^{p,q}(\mu)\subset L^1_0(\mu)$, $1<p<2$, $1\leq q\leq \infty$.
\end{corollary}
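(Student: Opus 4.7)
The plan is to unpack the definition of $L^1_0(\mu)$ and reduce the containment to a Hölder-type estimate for Lorentz spaces. By definition, to show $f \in L^1_0(\mu)$ when $f \in L^{p,q}(\mu)$, I need to verify that the product $f\,\vp_0$ belongs to $L^1(\mu)$.

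The key observation is that $\vp_0$ itself lies in a Lorentz space dual to $L^{p,q}(\mu)$. Indeed, since $1 < p < 2$, the interior $S_p^\circ = \{\la \in \C : |\IM(\la)| < (2/p-1)\rho\}$ contains $\la = 0$, so Lemma~\ref{sferichelpq}(i) applies with $\la = 0$ and gives $\vp_0 \in L^{p',r}(\mu)$ for every $r \in [1,\infty]$. In particular, taking $r = q'$ the conjugate exponent of $q$, we obtain $\vp_0 \in L^{p',q'}(\mu)$.

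I then apply the multiplication lemma (Lemma~\ref{Huntmultipl}) with $p_0 = p$, $q_0 = q$, $p_1 = p'$, $q_1 = q'$. The exponents satisfy $\frac{1}{p_0} + \frac{1}{p_1} = \frac{1}{p} + \frac{1}{p'} = 1$ and $\frac{1}{q_0} + \frac{1}{q_1} = \frac{1}{q} + \frac{1}{q'} = 1$, so the lemma yields
\[
\Lnorm{f\,\vp_0}{1,1} \leq C\,\Lnorm{\vp_0}{p',q'}\,\Lnorm{f}{p,q} < \infty.
\]
Since $L^{1,1}(\mu) = L^1(\mu)$ with equality of norms, this shows $f\,\vp_0 \in L^1(\mu)$, i.e.\ $f \in L^1_0(\mu)$, completing the inclusion.

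There is no real obstacle here: once Lemma~\ref{sferichelpq}(i) is available, the argument is a straightforward two-line application of Hölder in Lorentz spaces. The only small point to check is that $\la = 0$ lies in the open strip $S_p^\circ$, which holds precisely because $p$ is strictly less than $2$ (so that the half-width $(2/p-1)\rho$ is strictly positive). The hypothesis $1 < p$ is used implicitly to make $p'$ finite and the application of Lemma~\ref{Huntmultipl} legitimate with the stated exponents.
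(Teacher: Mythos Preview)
Your proof is correct and follows exactly the same approach as the paper: the paper's proof simply reads ``Use the multiplication theorem with $f$ in $L^{p,q}$ and $m=\vp_0$ which is in $L^{p',q'}$,'' and you have faithfully unpacked this one-line argument, including the justification that $\la=0\in S_p^\circ$ so that Lemma~\ref{sferichelpq}(i) applies.
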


\begin{proof}
Use the multiplication theorem with $f$ in $L^{p,q}$
and $m=\vp_0$ which is in $L^{p',q'}$.
\end{proof}

So, when $f$ is 
in $L^{p,q}$,  we can define the Jacobi transform $\hat f$ as a function 
on $\R$ and actually $\hat f$ turns to be  holomorphic in 
$S^\circ_p$.

\begin{corollary}\label{olotrasf}
If $f$ is in $L^{p,q}(\mu)$, $1<p<2$, $1\leq q\leq \infty$,
then $\widehat f$ is a bounded function
in the strip $S_p$ holomorphic in  $S^\circ_p$.
\end{corollary}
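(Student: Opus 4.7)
The plan is to establish three things in turn: that the integral defining $\widehat f(\lambda) = \int_{\R^+} f(x)\,\varphi_\lambda(x)\,d\mu(x)$ converges absolutely for every $\lambda$ in the closed strip $S_p$, that the resulting function is uniformly bounded on $S_p$, and that it is holomorphic in the interior $S_p^\circ$. All three will come from combining the decay estimates on the spherical functions proved in Lemma~\ref{sferichelpq} with the Lorentz-space multiplication theorem, Lemma~\ref{Huntmultipl}.

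For well-definedness and a pointwise bound on the interior, fix $\lambda \in S_p^\circ$. Lemma~\ref{sferichelpq}(i) places $\varphi_\lambda$ in $L^{p',q'}(\mu)$ (in fact in $L^{p',r}$ for every $r\in[1,\infty]$). Applying Lemma~\ref{Huntmultipl} with indices $(p_0,q_0)=(p,q)$ and $(p_1,q_1)=(p',q')$, so that $\frac1p+\frac1{p'}=1$ and $\frac1q+\frac1{q'}=1$, gives $f\varphi_\lambda\in L^{1,1}(\mu)=L^1(\mu)$ together with
\[
|\widehat f(\lambda)|\;\leq\;\|f\varphi_\lambda\|_1\;\leq\;C\,\|f\|_{p,q}\,\|\varphi_\lambda\|_{p',q'}.
\]
This shows $\widehat f$ is defined on $S_p^\circ$ and is bounded on every compact subset. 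Holomorphy in $S_p^\circ$ then follows by differentiating under the integral sign (or equivalently via Morera's theorem): for each fixed $x$, the map $\lambda\mapsto\varphi_\lambda(x)$ is entire, and the uniform integrability on compact subsets of $S_p^\circ$, supplied by the bound just obtained, legitimizes the interchange.

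The remaining and most delicate point is uniform boundedness on the closed strip. For this one does not use Lemma~\ref{sferichelpq}(i) — whose constants degenerate as $\lambda\to\partial S_p$ — but rather the uniform pointwise majorization that is valid on all of $S_p$: estimate~\eqref{stimasferiche} together with $|\IM\lambda|\leq (2/p-1)\rho$ yields
\[
|\varphi_\lambda(x)|\;\leq\;C\,(1+x)\,e^{-2\rho x/p'}\qquad \forall\,\lambda\in S_p,\ x>0,
\]
with a constant independent of $\lambda$, and this majorant belongs to $L^{p',\infty}(\mu)$ by (the proof of) Lemma~\ref{sferichelpq}(ii). Using Lemma~\ref{Huntmultipl} (or the corresponding Hölder inequality for Lorentz spaces) with $f\in L^{p,q}$ and this $\lambda$-independent majorant in $L^{p',\infty}$, we conclude $|\widehat f(\lambda)|\leq C\,\|f\|_{p,q}$ uniformly for $\lambda\in S_p$.

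The main obstacle is precisely this last step: Lemma~\ref{sferichelpq}(i) gives enough integrability of $\varphi_\lambda$ to make sense of $\widehat f(\lambda)$ in $S_p^\circ$ and to justify holomorphy, but the norms $\|\varphi_\lambda\|_{p',q'}$ with $q'<\infty$ are not uniformly bounded up to $\partial S_p$. The workaround is to trade the sharp Lorentz-space integrability for the weaker but $\lambda$-uniform weak-$L^{p'}$ control coming from~\eqref{stimasferiche}, at which point Lemma~\ref{Huntmultipl} closes the argument.
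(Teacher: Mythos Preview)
Your argument for holomorphy on $S_p^\circ$ is correct and is essentially the paper's proof: pair $f\in L^{p,q}$ with $\varphi_\lambda\in L^{p',q'}$ via Lemma~\ref{Huntmultipl}, then invoke Morera. Where your proof breaks down is the uniform bound on the closed strip. The majorant you produce from~\eqref{stimasferiche},
\[
\Psi(x)=(1+x)\,e^{-2\rho x/p'},
\]
does \emph{not} belong to $L^{p',\infty}(\mu)$. Indeed, for small $s$ the set $\{\Psi>s\}$ is an interval $[0,x_s]$ with $(1+x_s)e^{-2\rho x_s/p'}=s$, hence $s^{p'}\mu\{\Psi>s\}\sim s^{p'}e^{2\rho x_s}=(1+x_s)^{p'}\to\infty$. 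The proof of Lemma~\ref{sferichelpq}(ii) that you cite does not control $(1+x)e^{-2\rho x/p'}$; it relies instead on the sharper Flensted--Jensen estimate, which removes the factor $(1+x)$ and is what places $\varphi_\lambda$ itself (not the cruder majorant) in $L^{p',\infty}$ on $\partial S_p$.

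There is a second issue even if one had a uniform majorant in $L^{p',\infty}$: Lemma~\ref{Huntmultipl} with exponents $(p,q)$ and $(p',\infty)$ yields only $f\varphi_\lambda\in L^{1,q}$, which for $q>1$ is strictly larger than $L^1$ and does not by itself guarantee that $\int f\varphi_\lambda\,d\mu$ converges. So ``trading down'' to weak-$L^{p'}$ control of the spherical function is not a free move when $q>1$. The paper's one-line proof does not spell out how the closed-strip bound is obtained either; for the applications in Section~4 only holomorphy on $S_p^\circ$ is actually used, and that part of your argument is solid.
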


\begin{proof}
Use   Lemma~\ref{sferichelpq} and the Morera Theorem.
\end{proof}
We include the next result, which we were unable to find in the literature (see~\cite{ADY} in the case of Damek--Ricci spaces).
\begin{proposition}
The heat maximal operator 
$$
H^*f(x)=\sup_{t>0}|h_t\star f(x)|
\qquad f\in \cD^\sharp(\R^+)
$$
is of weak type $(1,1)$ and of strong type $(p,p)$.
\end{proposition}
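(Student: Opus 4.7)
The plan is to treat the two assertions separately, with the weak endpoint requiring the bulk of the work.

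For the strong $(p,p)$ bound, I would invoke Stein's maximal theorem for symmetric diffusion semigroups. The required hypotheses are all readily checked for $\{H_p(t)\}_{t>0}$: it is self-adjoint on $L^2(\mu)$ (because $h_t$ is real and $W$ is symmetric in its three variables), positivity preserving (since $h_t,W\geq 0$), and conservative, because $h_t\star 1=\int h_t\,d\mu=1$ (using $\int \tau_x f\,d\mu=\int f\,d\mu$ together with $\|h_t\|_1=1$). It is also an $L^p(\mu)$-contraction for every $p$ by Young's inequality \eqref{young}. Stein's theorem therefore delivers $\|H^* f\|_p\leq C_p\|f\|_p$ for $1<p<\infty$; the case $p=\infty$ is immediate from $\|h_t\|_1=1$.

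For the weak $(1,1)$ bound, the idea is to exploit the sharp estimate \eqref{stimaht} by splitting
\[
h_t=h_t\,\chi_{[0,1]}+h_t\,\chi_{(1,\infty)}=:h_t^0+h_t^\infty,
\]
and handling the local and global pieces by different techniques. On $[0,1]$ the density satisfies $A(x)\simeq x^{2\alpha+1}$, so $\mu$ is locally doubling there, and \eqref{stimaht} degenerates to the Euclidean-type bound $h_t^0(x)\lesssim t^{-\alpha-1}e^{-x^2/(4t)}$. A pointwise domination by the Hardy–Littlewood maximal function on small intervals, combined with standard Calder\'on–Zygmund theory on the doubling space $([0,2],\mu)$, then yields the weak $(1,1)$ bound for $\sup_t|h_t^0\star f|$.

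The global part $h_t^\infty$ is the main obstacle, since $\mu$ has exponential volume growth and standard Calder\'on–Zygmund theory does not apply. I would follow the strategy developed by Anker and co-authors in the Damek–Ricci setting (see \cite{ADY}). Using the classical lower estimate $\varphi_0(x)\gtrsim (1+x)e^{-\rho x}$ together with \eqref{stimaht}, one writes, for $x>1$,
\[
h_t^\infty(x)\leq C\,\varphi_0(x)\,t^{-\alpha-1}(1+t+x)^{\alpha-1/2}\,e^{-\rho^2 t-x^2/(4t)}.
\]
The product $\varphi_0(x)A(x)$ behaves like $(1+x)e^{\rho x}$, which allows one to reduce the global maximal inequality to a one-dimensional Euclidean heat maximal inequality by a change of density that absorbs $\varphi_0\,A$ into Lebesgue measure on the line. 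Since the Euclidean heat maximal operator is of weak type $(1,1)$ by classical theory, this transfers back. The delicate point is to verify that the polynomial-in-$t$ prefactor does not destroy the reduction, and this bookkeeping is where the bulk of the effort lies.
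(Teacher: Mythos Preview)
Your route to the strong $(p,p)$ bound via Stein's maximal theorem is legitimate and independent of the weak endpoint. The paper does it the other way round: it observes that $\|h_t\|_1=1$ gives the $L^\infty$ bound for free, proves weak $(1,1)$, and then interpolates by Marcinkiewicz.

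For the weak $(1,1)$ bound the paper's argument is organized differently from yours and, more importantly, your global step has a genuine gap. The paper splits in the \emph{time} variable. For $t>1$ the estimate~\eqref{stimaht} gives $\sup_{t>1}h_t(x)\lesssim(\cosh x)^{-2\rho}$, and Liu's pointwise bound $\tau_x(\cosh^{-2\rho})(y)\lesssim(\cosh x)^{-2\rho}$ immediately yields $|f|\star(\cosh^{-2\rho})(x)\lesssim(\cosh x)^{-2\rho}\|f\|_1$, hence weak $(1,1)$. For $0<t\le1$ one writes $h_t=\chi h_t+(1-\chi)h_t$ with $\chi=\chi_{[-1,1]}$; the far piece is dominated by the single integrable kernel $e^{-x^2/4}$, and the near piece is bounded pointwise by Liu's Hardy--Littlewood maximal function $Mf=\sup_{r>0}X_r\star|f|$ through an integration-by-parts computation. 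Everything rests on Liu's two results about the generalized translation.

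Your space-only decomposition runs into trouble at the global piece. The ``change of density that absorbs $\varphi_0 A$ into Lebesgue measure'' is available in \cite{ADY} because Damek--Ricci spaces are groups: convolution there is honest group convolution and one can conjugate by a character. In the pure Jacobi setting there is no underlying translation; the product formula~\eqref{prodotto} only gives the kernel $W(x,y,u)$, and there is no substitution that turns $h_t^\infty\star f(x)=\int\tau_x h_t^\infty(y)\,f(y)\,d\mu(y)$ into a Euclidean convolution. To control this integral you need pointwise information on $\tau_x$ applied to specific kernels, and that is precisely the content of Liu's lemmas that the paper invokes; the Euclidean reduction you sketch does not go through without them. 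A similar remark applies to your local piece: invoking ``Calder\'on--Zygmund on the doubling space $([0,2],\mu)$'' is not enough, since the evaluation point $x$ in $h_t^0\star f(x)$ is unrestricted and the measure is globally non-doubling. What is actually needed is the weak $(1,1)$ boundedness of $M$ on all of $(\R^+,\mu)$, again Liu's theorem.
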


\begin{proof}
Since $\|h_t\|_1=1$ for every $t>0$ and by\pref{young},
the heat maximal operator is trivially bounded on $L^\infty(\mu)$.
We now prove the weak $L^1$-estimate. From this estimate
and the Marcinkiewicz Interpolation Theorem the thesis follows.

As usual, we shall deal with the small time
maximal function
$$
H^*_0f(x)=\sup_{0<t\leq 1}|h_t\star f(x)|
\qquad f\in \cD^\sharp(\R^+)
$$
and the large time maximal function
$$
H^*_\infty f(x)=\sup_{t> 1}|h_t\star f(x)|
\qquad f\in \cD^\sharp(\R^+)
$$
separately. 
Note that 
\[
H^*_\infty f\leq |f|\star [\sup_{t> 1}h_t].
\]
From the heat kernel estimate\pref{stimaht} it follows that
$\sup_{t>1}h_t(x)=O((1+x)^{-1/2}\, e^{-2\rho x})$
when $x$ is large
so that $H^*_\infty$ is of weak type $(1,1)$. Indeed,
if  $k(x)=\cosh(x)^{-2\rho}$,  
Liu~\cite[Lemma~3.2]{Liu} proved that
\begin{align*}
\tau_x k(y)
\leq C\, \cosh(x)^{-2\rho} \qquad\forall y>0,
\end{align*}
so that
\begin{align*}
|f|\star k(x)
=\int_0^\infty  |f(y)|\, \tau_x k(y)\, A(y)\, dy
\leq 
\cosh(x)^{-2\rho}\|f\|_1.
\end{align*}
Therefore
\begin{align*}
\mu\set{x\, :\, |f|\star k(x)>\lambda }
&\leq 
\mu\set{x\, :\,(\cosh x)^{2\rho}< C\,\|f\|_1/\lambda }
\\
&=\int_0^{x_0}A(u)\, du
\\
&\leq C\, (\cosh x_0)^{2\rho}= C\,\|f\|_1/\lambda.
\end{align*}
where $x_0>0$ is such that $(\cosh x_0)^{2\rho}= C\,\|f\|_1/\lambda$ (if any,
otherwise $x_0=0$ and the weak type inequality is trivial).

In order to prove that $H^*_0$ is of weak type $(1,1)$,
we first note that the estimates of the heat kernel imply that
when $0<t\leq 1$,
\begin{align*}
&0\leq h_t(x)\leq C\, e^{-x^2/4}=k_1(x)
\qquad &x>1
\\
& h_{t}(x)\simeq  \,k_t(x)
\qquad &0<x\leq 1,
\end{align*}
where 
$$
k_t(x)=t^{-(\a+1)}\,e^{-x^2/4t}
\qquad t,x>0.
$$
Let $\chi$ denote the characteristic function of the interval
$[-1,1]$,
 and write
\[
H^*_0f(x)
\leq \sup_{0<t\leq 1}|((1-\chi)h_t)\star f(x)|+ \sup_{0<t\leq 1}|(\chi h_t)\star f(x)|
\leq k_1\star |f|(x)+ C\,\sup_{0<t\leq 1} (\chi k_t)\star |f|(x)
\]
Since the kernel $k_1$ is integrable,
the operator $f\mapsto k_1\star f$ is $L^1$-bounded.

For the estimate of the other term we will use the
weak type $(1,1)$ boundedness of a Hardy--Littlewood maximal function. Let $X_r$ denotes the characteristic function of $[-r,r]$,
normalized so that $\int X_r\, d\mu=1$ and define
 $$
 Mf=\sup_{r>0}X_r\star |f|.
 $$
Liu~\cite{Liu} proved that the operator $M$ is of weak type $(1,1)$.
 
 Let $\nu(y)=\int_0^y   A(x)\, dx$ and 
   $G(y)= \int_0^y \tau_x|f|(u)\,  A(u)\, du$. 
   Note that  $G(y)\leq \nu(y)\, M\tau_x|f|(0)$. Applying 
   the size estimate\pref{crescitaA}, we get

\begin{align*}
 (\chi k_t)\star |f|(x)
 &=
\int_0^1 \tau_x|f|(y)\, k_t(y)\,   d\mu(y)
\\
&=-\int_0^1 G(y)\, k'_t(y)\, dy+G(1)k_t(1)
\\
&\leq M\tau_x|f|(0)\int_0^1\nu(y)  (- k'_t(y))\,   dy +
G(1)k_t(1)
\\
&\leq M\tau_x|f|(0)\int_0^1\nu'(y)   k_t(y)\,   dy
\\
&= Mf(x)\int_0^1    k_t(y)\,   A(y) dy
\leq  
 Mf(x).
\end{align*}
 The thesis follows.
\end{proof}

The standard method of approximation using the heat kernel gives the following 
inversion formula for functions in the Lorentz space $L^{p,q}(\mu)$, 
with $1<p<2$, $q\geq 1$ (see~\cite{RS} for the case of Damek--Ricci spaces).

\begin{proposition}
Let $f$ be in $L^{p,q}(\mu)$, with $1<p<2$, $q\geq 1$ 
or $f$ in $L^1\cup L^2(\mu)$. 
If $\hat f$ is in $L^1(\R,|\bfc(\la)|^{-2}\,d\la)$,
then for almost every $x$ in $\R^+$,
\begin{equation}\label{inversioneLpq}
f (x) = \frac1{2\pi} 
\int_0^\infty \hat{f}(\la)\,\vp_\la(x)\, |\bfc(\la)|^{-2}\,d\la.
\end{equation}
\end{proposition}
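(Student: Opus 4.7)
The plan is to mollify $f$ by the heat semigroup to reduce the problem to a case where the Plancherel/inversion formula already applies, and then to pass to the limit $t\to 0^{+}$ on both sides. Set
$$
f_t(x)=f\star h_t(x),\qquad t>0,
$$
and note that, because $f\in L^{1}_0(\mu)$ (immediately when $f\in L^{1}$, by the previous corollary when $f\in L^{p,q}(\mu)$, and by $L^{2}$--Plancherel when $f\in L^{2}$), the Jacobi transform $\hat f$ is a bounded continuous function on $\R$. Since $\widehat{h_t}(\la)=e^{-t(\la^2+\rho^2)}$, Fubini yields
$$
\widehat{f_t}(\la)=\hat f(\la)\,e^{-t(\la^2+\rho^2)},
$$
which is simultaneously in $L^1$ and $L^2$ of $\tfrac{1}{2\pi}|\bfc(\la)|^{-2}\,d\la$ (from the hypothesis $\hat f\in L^1(|\bfc|^{-2}d\la)$ and the boundedness of $\hat f$). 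The $L^2$--inversion theorem recalled in Subsection~\ref{Jacobi} then gives, for a.e. $x$,
$$
f_t(x)=\frac{1}{2\pi}\int_0^\infty \hat f(\la)\,e^{-t(\la^2+\rho^2)}\,\vp_\la(x)\,|\bfc(\la)|^{-2}\,d\la.
$$

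Next I would let $t\to 0^{+}$. The right-hand side converges pointwise (in fact uniformly in $x$) to $\frac{1}{2\pi}\int_0^\infty \hat f(\la)\,\vp_\la(x)\,|\bfc(\la)|^{-2}d\la$ by dominated convergence, using $|\vp_\la(x)|\leq\vp_0(x)\leq 1$ for real $\la$ and the integrability of $\hat f$ against $|\bfc|^{-2}d\la$. On the left-hand side, the desired pointwise a.e. limit $f_t(x)\to f(x)$ follows from the previous proposition: the weak type $(1,1)$ and strong $(\infty,\infty)$ bounds on $H^{*}$, together with pointwise convergence for $g\in\cD^\sharp(\R^+)$ (where $g\star h_t\to g$ uniformly by continuity), give a.e.\ convergence on $L^1(\mu)$ and on $L^2(\mu)$ by the standard density argument, and interpolating the maximal bounds yields the corresponding result on the Lorentz scale $L^{p,q}(\mu)$ for $1<p<2$, $1\leq q\leq\infty$. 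Combining the two limits produces the claimed formula.

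The main obstacle is the a.e.\ convergence $f\star h_t\to f$ on the Lorentz spaces $L^{p,q}(\mu)$, which needs both a Lorentz-space interpolation of the weak $(1,1)$ and $L^{\infty}$ bounds for $H^{*}$ and the availability of a dense subclass of smooth functions (for $q<\infty$ one may take $\cD^\sharp(\R^+)$, while the endpoint $q=\infty$ requires either a duality argument with $L^{p',1}$ or reduction to the $L^{p}$-case by embedding $L^{p,\infty}$ into an appropriate $L^{r}$ on sublevel sets). Once these standard tools are in place, all remaining steps are routine bookkeeping with dominated convergence.
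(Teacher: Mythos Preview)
Your strategy---mollify by the heat kernel and pass to the limit $t\to 0^+$ using dominated convergence on the spectral side and a.e.\ convergence of $f\star h_t$ on the spatial side---is exactly the paper's. The one substantive difference is in how the a.e.\ convergence on $L^{p,q}(\mu)$ is obtained. You propose to interpolate the maximal inequality between weak $(1,1)$ and $L^\infty$ to reach the Lorentz scale, and you correctly flag the density issue at $q=\infty$. The paper sidesteps this entirely: it invokes the embedding $L^{p,q}(\mu)\subset L^1(\mu)+L^2(\mu)$ for $1<p<2$ (a consequence of Hunt's interpolation description of Lorentz spaces), writes $f=f_1+f_2$ with $f_i\in L^i$, and uses a.e.\ convergence of $f_i\star h_t\to f_i$ on $L^1$ and $L^2$ separately. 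This avoids any Lorentz-space interpolation of $H^*$ and any density argument in $L^{p,\infty}$, so your ``main obstacle'' simply evaporates.

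Two small points to tighten. First, your claim that $\hat f$ is bounded and continuous is fine for $f\in L^1$ or $f\in L^{p,q}$ (via $f\in L^1_0$), but for $f\in L^2$ the transform is only defined by Plancherel extension and need not be bounded; however your argument only needs $\widehat{f_t}=\hat f\,e^{-t(\la^2+\rho^2)}\in L^1\cap L^2(|\bfc|^{-2}d\la)$, which follows from $\hat f\in L^1\cap L^2$ in that case. Second, to apply the $L^2$-inversion formula to $f_t$ you should note that $f_t=f\star h_t\in L^2(\mu)$; this is immediate from Young once you have the $L^1+L^2$ decomposition of $f$ (or, in the paper's version, one works distributionally by pairing against a test function $\psi$, which is cleaner).
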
 
\begin{proof}
We can write  $f = 
f_1 + f_2$, where $f_1$ is in $L^1(\mu)$
and $f_2$ is in $L^2(\mu)$
~\cite{H}.
Then for every $t>0$, we have $f\star h_t= f_1\star h_t+f_2\star h_t$.
Since the heat maximal operator is of weak type
(1, 1)
and $g\star  
h_t (x) \to g(x)$, for $t\to 0$, whenever $g$ is smooth and compactly supported,
 it follows that there exist measurable sets $E_1 , E_2$ 
of null measure such that
$ f_1 \star  h_t (x) \to f_1 (x)$ for all $x$ in $^cE_1$ 
and $f_2\star  h_t (x) \to f_2 (x)$ for all $x$ in $^cE_2$,  as $t\to 0$. 
This implies that if  $x$ is not in $E_1\cup E_2$ then
$ f \star  h_t (x) \to f (x)$ as $t\to 0$.
Moreover $\mu(E)\leq\mu(E_1)+\mu(E_2)=0$.

Since $\hat{f}$ is in $L^1(\R,|\bfc(\la)|^{-2}\,d\la)$,
for every test function $\psi$ we have 
\begin{align*}
\langle f\star h_t,\psi\rangle
=\frac1{2\pi} \int_0^\infty
\int_0^\infty \hat{f}(\la)\,e^{-t(\la^2+\rho^2)}\,\vp_\la(x)\, |\bfc(\la)|^{-2}\,d\la
\,\psi(x)\,  d\mu (x).
\end{align*}
Using the Dominated Convergence Theorem we now get the result.
\end{proof}

\section{Dynamics of the heat semigroup}
 
 For every $p$ in $[1,\infty]$ denote by $\Delta_p$ the closure in $L^p(\mu)$ of the operator $\Delta$
with domain $\cD^\sharp (\R^+)$.
As previously observed the family 
$$
 e^{-t\Delta_p}f=h_t\star f\qquad \forall f\in L^p(\mu)
$$
is a strongly continuous semigroup on $L^p(\mu)$.

In this section we study the dynamics of the shifted semigroup
$$
   e^{-t(\Delta_p-\theta)}: L^p(\mu) \to L^p(\mu).
$$

\begin{theorem}\label{chaotic} Let $2<p<\infty$. Then  for all $\theta> \theta_p=
  \rho^2 -  \rho^2\,\left(\frac{2}{p}-1\right)^2$ the semigroup
  $$
   e^{-t(\Delta_p-\theta)}: L^p(\mu) \to L^p(\mu)
  $$
  is chaotic.
 
  \end{theorem}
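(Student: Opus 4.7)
The plan is to apply the Desch--Schappacher--Webb criterion (Theorem~\ref{thm dsw}) to the generator $B = -\Delta_p + \theta I$ on the separable Banach space $\cX = L^p(\mu)$. I need an open connected set $\Omega \subset \sigma_{pt}(B)$ meeting the imaginary axis, a choice of nonzero eigenvector $\phi_z$ for each $z \in \Omega$, and the verification that the pairing $F_f(z) = \langle f, \phi_z \rangle$ is holomorphic on $\Omega$ for every $f \in (L^p)' = L^{p'}(\mu)$ and vanishes identically only when $f = 0$.

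I would take $\Omega = \theta - P_p = \{\theta - (\la^2+\rho^2) : \la \in S_p^\circ\}$, the image of the open connected region $P_p$ under $w \mapsto \theta - w$. For $z = \theta - (\la^2+\rho^2) \in \Omega$, Corollary~\ref{parabolica} provides $\vp_\la \in L^p(\mu)$ with $\Delta \vp_\la = (\la^2+\rho^2)\vp_\la$, so $\phi_z := \vp_\la$ satisfies $B \phi_z = z \phi_z$ and thus $\Omega \subset \sigma_{pt}(B)$. Writing $\la = a + ib$ with $|b| < (1-2/p)\rho$, the real part $a^2 - b^2 + \rho^2$ of $\la^2 + \rho^2$ takes all values in $(\theta_p, \infty)$, so the real part of $z$ ranges over $(-\infty, \theta - \theta_p)$; hence $\Omega$ meets the imaginary axis precisely under the hypothesis $\theta > \theta_p$. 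Since $\vp_\la = \vp_{-\la}$ (the hypergeometric parameters depend on $\la$ only through $\la^2$), $\phi_z$ is independent of the sign of $\la$, and $\la$ may be chosen as a holomorphic function of $z$ locally.

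For $f \in L^{p'}(\mu) = L^{p',p'}(\mu)$ I would write $F_f(z) = \int f\, \vp_{\la(z)}\, \dmu = \widehat f(\la(z))$. Since $1 < p' < 2$, Corollary~\ref{olotrasf} guarantees $\widehat f$ is holomorphic on $S_{p'}^\circ = S_p^\circ$; composition with the local holomorphic branch of $\la(\cdot)$ shows $F_f$ is holomorphic on $\Omega$. The most delicate step is the non-triviality condition: if $F_f \equiv 0$ on $\Omega$, then $\widehat f$ vanishes on a non-empty open subset of $S_p^\circ$, hence on all of $S_p^\circ$ by analytic continuation and, by continuity, on $\R$; the $L^{p,q}$ inversion formula of Section~3 (applied to $f \in L^{p',p'}$ with $\widehat f = 0 \in L^1(\R, |\bfc(\la)|^{-2}\,d\la)$) then forces $f = 0$ a.e. Theorem~\ref{thm dsw} now yields chaoticity. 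The key conceptual move is the identification $\langle f, \vp_\la \rangle = \widehat f(\la)$, which converts the abstract hypotheses of the DSW criterion into precisely the holomorphy of the Jacobi transform on $S_p^\circ$ and its injectivity on $L^{p'}$, both already packaged in Section~3.
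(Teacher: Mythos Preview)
Your proof is correct and follows essentially the same route as the paper: apply the DSW criterion with the Jacobi functions $\vp_\la$ as eigenvectors, invoke Corollary~\ref{olotrasf} for the holomorphy of $F_f=\hat f\circ\la$, and use the $L^{p'}$ inversion formula for injectivity. The only technical difference is in the choice of $\Omega$. The paper excises the real half-line $\{z\in\R:z\le\rho^2-\theta\}$ from the shifted parabolic region precisely so that a \emph{global} holomorphic branch of $\la=\sqrt{z+\theta-\rho^2}$ exists on $\Omega_\theta$, whereas you keep the full region and rely on the evenness $\vp_\la=\vp_{-\la}$ together with \emph{local} branches of $\la(z)$. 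Your version has a small gap: the point $z_0=\theta-\rho^2$ (corresponding to $\la=0$) lies in your $\Omega$, and no local holomorphic branch of $\la$ exists there, so ``$\la$ may be chosen as a holomorphic function of $z$ locally'' fails at $z_0$. This is easily repaired---since $\hat f$ is even in $\la$ it factors as $\hat f(\la)=G(\la^2)$ with $G$ holomorphic, giving $F_f(z)=G(\theta-\rho^2-z)$ holomorphic on all of $\Omega$ (alternatively, $F_f$ is bounded and holomorphic on $\Omega\setminus\{z_0\}$, so $z_0$ is removable)---but it is exactly the nuisance the paper's slit is designed to sidestep.
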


\begin{proof} We apply Theorem~\ref{thm dsw}.
 By Corollary\pref{parabolica}, the parabolic region
 $P_p$ is contained in 
 the point spectrum  $\sigma_{pt}(\Delta_p)$  and the corresponding eigenfunctions are given by 
 the appropriate Jacobi functions.  The vertex of the parabolic region $P_p$ is at the point
 $$
  \theta_p = \rho^2 -  \rho^2\,\left|\frac{2}{p}-1\right|^2
 =
  4\rho^2\, \frac1{pp'}
 $$ 
 and hence the point spectrum of 
 $(\Delta_p - \theta)$ intersects the imaginary axis for all
 $\theta > \theta_p$.

 Suppose now that $\theta>\theta_p$ and let $\Omega_\theta$ denote the set
 $
  \left(P_p -\theta \right)\setminus \{z\in \R : z\leq \rho^2 - \theta \},
 $
 i.e.
 $$
 \Omega_\theta=
 \left\{
 z\in\C\,\,:\,\, z=\la^2+\rho^2-\theta\quad |\IM(\la)|<( 1-\tfrac2{p} )\rho
  \right\}\setminus \{z\in \R : z\leq \rho^2 - \theta \}.
 $$
 Then $\Omega_\theta$ is an open, connected subset of the point spectrum of 
 $(\Delta_p - \theta)$
 that intersects the imaginary axis.
 
 Since $(\Omega_\theta +\theta -\rho^2) \cap \{x\in \R\,\,:\,\, x<0\}= \emptyset$  we choose
 an analytic branch $\sqrt{z + \theta -\rho^2}$
  of the square root    so that $\RE \sqrt{z + \theta -\rho^2}>0$, for every $z$ in  $ \Omega_\theta$.
   Note that  $z\longmapsto \sqrt{z + \theta -\rho^2}$ maps 
  $\Omega_\theta$ onto the open strip
  $$
  \left\{\la\in \C :  \RE \la>0, |\IM \la| < \rho\,  ( 1-\tfrac{2}{p})\right\}.
  $$
 For every $z$ in  
  $\Omega_\theta$
  we choose the eigenfunction $\phi_z$ defined by 
 
\begin{equation}\label{phiz}
\quad \phi_z=\vp_\la \quad \text{where } z=\la^2+\rho^2-\theta, 
\end{equation}
so that  $ (\Delta_p-\theta) \phi_z=z\phi_z$.
As in Theorem~\ref{thm dsw}, for every  
 $f$ in $  L^{p'}(\mu)$, we define
   the function $F_f\,:\,
\Omega_\theta\longrightarrow \C$  by
$$
F_f(z)=\langle f, \phi_z\rangle=\hat f(\sqrt{z + \theta -\rho^2}).
$$
Then by Corollary~\ref{olotrasf} the function  $F_f$ is holomorphic since it 
is the composition of two holomorphic functions. Moreover
  $F_f =0$ implies $f=0$ by the inversion formula\pref{inversioneLpq}.

\end{proof}

In the following theorem we  prove  that the semigroup
     $$
       e^{-t(\Delta_p-\theta)}: L^p(\mu) \to L^p(\mu)
     $$
         is not chaotic when $1< p \leq 2$ for every  $\theta$ in $\R$.
         
\begin{theorem}\label{notchaotic}
  Let $1< p \leq 2$ and $\theta$ in $\R$. Then the semigroup
     $$
       e^{-t(\Delta_p-\theta)}: L^p(\mu) \to L^p(\mu)
     $$
         does not have periodic elements. Moreover  when $1< p < 2$ it is not hypercyclic.
\end{theorem}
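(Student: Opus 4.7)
The plan is to translate everything to the Jacobi transform side, where the semigroup acts as multiplication by $e^{-t(\la^2+\rho^2-\theta)}$. For $f\in\cD^\sharp(\R^+)$ the convolution theorem gives
$$
\widehat{e^{-t(\Delta_p-\theta)}f}(\la)=e^{-t(\la^2+\rho^2-\theta)}\,\widehat{f}(\la),
$$
and by density of $\cD^\sharp(\R^+)$ in $L^p(\mu)$, together with the continuity of the evaluation $f\mapsto\widehat{f}(\la)$ on $L^p(\mu)$ for each $\la\in S_p$ (guaranteed by Lemma~\ref{sferichelpq} for $1<p<2$ and by Plancherel when $p=2$), the identity passes to every $f\in L^p(\mu)$ with $1<p\leq 2$. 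This multiplier identity is the cornerstone of both assertions.

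For the absence of periodic elements, assume $e^{-t(\Delta_p-\theta)}f=f$ for some $t>0$ and some $f\in L^p(\mu)$. The multiplier identity yields
$$
\widehat{f}(\la)\,\bigl(e^{-t(\la^2+\rho^2-\theta)}-1\bigr)=0
$$
in the appropriate sense on $\R$. For real $\la$ the second factor is a continuous real-valued function vanishing at most at the two points $\pm\sqrt{\theta-\rho^2}$ (and at no point if $\theta<\rho^2$). When $p=2$, Plancherel then forces $\widehat{f}=0$ almost everywhere and hence $f=0$. When $1<p<2$, Corollary~\ref{olotrasf} ensures that $\widehat{f}$ is holomorphic on $S_p^\circ\supset\R$, so $\widehat{f}\equiv 0$ on $\R$; the inversion formula~\eqref{inversioneLpq}, trivially applicable since $\widehat{f}\in L^1(\R,|\bfc(\la)|^{-2}d\la)$, again yields $f=0$.

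For the failure of hypercyclicity when $1<p<2$, the key idea is to introduce the continuous linear functional
$$
Lf=\widehat{f}(0)=\int_0^\infty f(x)\,\vp_0(x)\,d\mu(x).
$$
Since $0$ lies in $S_p^\circ$ (as $p\neq 2$), Lemma~\ref{sferichelpq}(i) implies $\vp_0\in L^{p',q}(\mu)$ for every $q\in[1,\infty]$, so $L$ is bounded on $L^p(\mu)$; choosing any nonnegative $f\in\cD^\sharp(\R^+)$ supported in a neighborhood of $0$ where $\vp_0>0$ shows $L\not\equiv 0$. The multiplier identity then gives
$$
L\bigl(e^{-t(\Delta_p-\theta)}f\bigr)=e^{t(\theta-\rho^2)}\,Lf,
$$
so the $L$-image of any orbit lies in the real ray $\{r\,Lf:r>0\}\cup\{0\}$, which is never dense in $\C$. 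Since a non-zero continuous linear functional on a Banach space is surjective and open by the open mapping theorem, a dense orbit in $L^p(\mu)$ would produce a dense image in $\C$, a contradiction.

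The main obstacle is the careful justification of the multiplier identity for arbitrary $f\in L^p(\mu)$ when $p\neq 2$; once this is established via the approximation argument sketched above, both conclusions rest on the elementary observation that the real multiplier $e^{-t(\la^2+\rho^2-\theta)}-1$ has only finitely many real zeros, combined with the injectivity of the Jacobi transform on $L^p(\mu)$ already developed in Section~3.
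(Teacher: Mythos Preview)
Your treatment of the absence of periodic points is essentially the paper's argument: pass to the transform side, observe that the multiplier $e^{-t(\la^2+\rho^2-\theta)}-1$ has only isolated zeros, and invoke Plancherel (for $p=2$) or holomorphy plus the inversion formula~\eqref{inversioneLpq} (for $1<p<2$) to conclude $f=0$.

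For non-hypercyclicity when $1<p<2$, however, you take a genuinely different route. The paper simply quotes \cite[Theorem~3.3]{DSW}: if a semigroup is hypercyclic then the adjoint of its generator has empty point spectrum, and this contradicts Corollary~\ref{parabolica} applied to $\Delta_{p'}$ with $p'>2$. Your argument is a self-contained unpacking of that abstract obstruction in this concrete setting: the functional $Lf=\widehat f(0)=\langle f,\vp_0\rangle$ is precisely pairing against the adjoint eigenvector $\vp_0\in L^{p'}(\mu)$, and you verify directly that an adjoint eigenvector forces every orbit into a one-dimensional (here, a real ray) image under $L$, hence not dense. The paper's approach is shorter and highlights the structural reason (nonempty adjoint point spectrum); yours is more elementary, avoids the external reference, and makes the obstruction explicit. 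Both are correct. Two small remarks: your claim that evaluation $f\mapsto\widehat f(\la)$ is continuous on $L^p$ for all $\la\in S_p$ is only justified for $\la\in S_p^\circ$ (on $\partial S_p$ Lemma~\ref{sferichelpq}(ii) gives $\vp_\la\in L^{p',\infty}$, not $L^{p'}$), but you only use interior points, so nothing is lost; and the open mapping theorem is unnecessary, since continuity and surjectivity of $L$ already force the image of a dense set to be dense.
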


\begin{proof}
 Let $1<p\leq 2$, $\theta$ in $\R$ and   $f$   a periodic point in $L^p(\mu)$ for  $e^{-t(\Delta_p- \theta)}$. Then there exists $t>0$
such that $h_t\star f=f$ or equivalently 
$
\left(e^{-t(\la^2 +\rho^2-\theta)}-1\right)\,\widehat f(\lambda)=0
$
for every $\lambda$ in $S^o_p$ when $1<p<2$ and for almost every real  $\lambda$  when $p=2$.
By the inversion formula\pref{inversioneLpq}  $f=0$ and the first part follows.
 
Let $1<p<2$ and
assume that the semigroup $e^{-t(\Delta_p-\theta)}$ is hypercyclic. Then, the dual operator
$(\Delta_p - \theta)' = \Delta_{p'}-\theta$ of its generator would have empty point spectrum
~\cite[Theorem~3.3]{DSW}. This a contradiction and the  thesis follows. 
\end{proof}

\end{document}